\documentclass[11pt]{amsart}

\usepackage[english]{babel}
\usepackage[colorinlistoftodos,bordercolor=orange,backgroundcolor=orange!20,linecolor=orange,textsize=small]{todonotes}

\usepackage{filecontents}
\usepackage[useregional]{datetime2}
\usepackage{tikz}
\usetikzlibrary{matrix,shapes,arrows,positioning}
\definecolor{burntorange}{cmyk}{0,0.52,1,0}
\usepackage{booktabs}
\usepackage{float}
\usepackage{mwe}

\usepackage{fullpage}
\usepackage{caption}
\usepackage{subcaption}
\usepackage{amsmath}
\usepackage{amsthm}
\usepackage{hyperref}
\usepackage{cleveref}
\usepackage{centernot}
\usepackage{lineno,hyperref}
\modulolinenumbers[5]
\usepackage{amsmath,amssymb}
\usetikzlibrary{arrows.meta}
\usepackage{blkarray}
\usepackage{float}
\usepackage[font=footnotesize,labelfont=bf]{caption}
\usepackage{tikz}

\usetikzlibrary{matrix,shapes,arrows,positioning}
\definecolor{burntorange}{cmyk}{0,0.52,1,0}
\modulolinenumbers[5]
\usepackage{forest}
\usepackage{stackengine}
\stackMath
\usepackage{booktabs}
\usepackage{matlab-prettifier}

\newtheorem{theorem}{Theorem}[section]
\newtheorem{lemma}[theorem]{Lemma}

\newtheorem{corollary}[theorem]{Corollary}
\theoremstyle{definition}
\newtheorem{definition}[theorem]{Definition}

\theoremstyle{remark}
\newtheorem{remark}[theorem]{Remark}
\newtheorem{example}[theorem]{Example}

\usepackage[foot]{amsaddr}
\makeatletter
\renewcommand{\email}[2][]{%
  \ifx\emails\@empty\relax\else{\g@addto@macro\emails{,\space}}\fi%
  \@ifnotempty{#1}{\g@addto@macro\emails{\textrm{(#1)}\space}}%
  \g@addto@macro\emails{#2}%
}
\makeatother

\usepackage{graphicx}

\newcommand{\new}[1]{{\em #1}}

\DeclareMathAlphabet{\mathbbold}{U}{bbold}{m}{n}

\newcommand{\R}{\mathbb R}

\newcommand{\rmax}{\mathbb{R}_{\max}}

\usepackage[scr=boondox,scrscaled=1.05]{mathalfa}

\newcommand{\trop}[1][]{\ifthenelse{\equal{#1}{}}{ \mathbb{T} }{ \mathbb{T}(#1) }}

\usepackage{amssymb}

\newcommand{\N}{\mathbb{N}}
\newcommand{\C}{\mathbb{C}}

\newcommand{\spn}{\mathrm{span}}

\begin{document}

\title{Generalized Eigenvectors and Rayleigh bounds for tropical algebraic eigenvalues}
\author{Dariush Kiani$^{\, 1}$}
\author{Hanieh Tavakolipour$^{\, 2}$}
\address[$1,2$]{Amirkabir University of Technology, Department of Mathematics and Computer Science}
\email[$1$]{dkiani@aut.ac.ir}
\email[$2$]{h.tavakolipour@aut.ac.ir}
\thanks{The study was funded by Iran National Science Foundation (INSF) (Grant No. 99023636).}
\date{\today}
\maketitle

\begin{abstract}
In this paper, we review the eigenpair problem in the context of tropical algebra. An important fact that has been largely overlooked in spectral theory of tropical algebra is that the tropical algebraic eigenvalues, which are obtained from the characteristic polynomial, may not correspond to any tropical eigenvector satisfying the standard eigenvalue-eigenvector equation. To resolve this, we use the tropical numerical range and define a generalized tropical eigenvalue–eigenvector relation. We define any non-zero vector satisfying this equation as a generalized tropical eigenvector. We show that a generalized tropical eigenvector always exists for any given tropical algebraic eigenvalue. We propose a computationally inexpensive method for the construction of these vectors. Additionally, we prove an upper bound for the algebraic eigenvalues of a tropical matrix, using the tropical Rayleigh quotients.\end{abstract}

\subjclass[2020]{AMS classification: 15A60, 15A18, 15A42, 14T10}

\keywords{max-plus algebra, tropical algebra, tropical algebraic eigenvalues, tropical eigenvector, tropical numerical range } 

\setcounter{tocdepth}{3}

\section{Introduction}
\subsection{motivation}
Tropical semiring is the set $\rmax:= \R\cup\{-\infty\}$ equipped with addition $a \oplus b = \max\{a,b\}$ and multiplication $a \otimes b = a+b$, with the neutral element (of addition) $\varepsilon:=-\infty$. This algebra is known by various names, including max-plus algebra or max algebra. In general, tropical algebra deals with algebraic systems where max or min is used instead of addition. For more information about tropical algebra and its applications in discrete event systems, combinatorial optimization and scheduling, mathematical modeling and also algebraic geometry, see~\cite{baccelli1992synchronization,butkovivc2010max, maclagan2015introduction,heidergott2006max,akian2006max}.

For any square tropical matrix, there exist two different types
of tropical eigenvalues: geometric and algebraic. \new{Tropical geometric eigenvalues} are the values which satisfy the standard eigenvalue-eigenvector equation. The number
of tropical geometric eigenvalues of a matrix cannot exceed the number of strongly connected components of its associated digraph \cite{butkovivc2010max}. On the other hand, \new{tropical algebraic eigenvalues} are the \new{tropical roots} of the tropical characteristic polynomial of the matrix. Every $n$-by-$n$ tropical matrix has $n$ tropical algebraic eigenvalues, counting multiplicities \cite{akian2006max}. Due to the absence of additive inverse in tropical semirings, these two notions are not equivalent, although there are some connections between them. In particular, every tropical geometric eigenvalue is also a tropical algebraic eigenvalue, and the largest tropical geometric eigenvalue coincides with the largest tropical algebraic eigenvalue \cite{cuninghame1983characteristic}.

Tropical algebraic eigenvalues are important because various results relate the classical eigenvalues of the matrix to the tropical algebraic eigenvalues. Actually, tropical algebraic eigenvalues give  order-of-magnitude approximations to the moduli of the classical eigenvalues of matrices. Such results have recently received more attention because of modern developments in tropical algebra and numerical analysis \cite{akian402090min, akian2014tropical, tavakolipour2020asymptotics,marchesini2015tropical,  tavakolipour2018tropical}. Although not directly related to the present work, it is worth noting that tropical roots are of considerable numerical interest due to their computational efficiency and their widespread use in approximating the moduli of eigenvalues of matrix polynomials~\cite{akian2004perturbation,noferini2015tropical,bini2013locating, akian2016non, sharify2011scaling}. 

 As we mentioned before, unlike in the classical linear algebra, for $A \in \rmax^{n \times n}$ and the tropical algebraic eigenvalue $\lambda$ there is not necessarily a vector  $x \neq (\varepsilon, \ldots, \varepsilon)^T$ which satisfies the classical eigenvalue-eigenvector equation. This problem is a difficulty in eigenproblem of tropical linear algebra. Some authors have investigated this problem in order to find an alternative relation for the standard eigenvalue-eigenvector relation in this algebra.
 
  Some notions of generalized eigenvectors associated with eigenvalues over $\rmax$ have already been investigated in the literature. In particular, Nishida and co-authors have used the coefficient
of the tropical characteristic polynomial $P_A$ of the square matrix $A$. They have used the fact that each coefficient
of the characteristic polynomial of $A$ corresponds to the weight of
a multi-circuit in the associated graph. They have called a multi-circuit $\lambda$-maximal if the corresponding term attains the maximum of the $P_A(\lambda)$. The basic condition for a matrix to be in this way is that for each $\lambda \in \rmax$ the
lengths of any two distinct $\lambda$-maximal multi-circuits be different which makes some challenges for symmetric matrices ~\cite{Nishida2020,nishida2021independence}. 

Some other authors have used some extensions of tropical semiring in order to find some notions of eigenvectors. Particularly, supertropical algebra in the work of Izhakian and Rowen~\cite{izhakianmatrix3} and symmetrized tropical algebra in the work of Akian and others~\cite{akian2025spectral}.

\subsection{Generalized tropical eigenvector}
In \cite{tavakolipour2019numerical}, the authors defined \new{tropical numerical range} similarly to the ordinary numerical range (\Cref{nr_def}). They showed that tropical numerical range is a set which contains all the tropical algebraic eigenvalues. Moreover,  they proved for the $n$ by $n$ matrix $A = (a_{ij})$ over $\rmax$, the tropical numerical range is equal to the closed interval $[\min_i a_{ii}, \max_{i,j} a_{ij}]$.

 In this article using the tropical numerical range, we define a generalization of the tropical eigenvector corresponding to the tropical algebraic eigenvalues. For the algebraic tropical eigenvalue $\lambda \in \rmax$,  there exist the \new{generalized tropical eigenvector} $x$ of size $n \times 1$ and $x \neq (\varepsilon, \ldots, \varepsilon)^T$ which satisfy the generalized eigenvalue-eigenvector equation 
 \[x^{T} \otimes  A\otimes    x = \lambda\otimes x^T\otimes x,\]
 We prove that for each algebraic eigenvalue of $A$, there exists at least one generalized tropical eigenvector $x \neq (\varepsilon, \ldots, \varepsilon)^T$ over  $\rmax$. A low-complexity algorithm is proposed for the construction of these vectors.
Finally, we prove a theorem analogous to the classical Rayleigh quotient, utilizing the properties of tropical algebraic eigenvalues and generalized eigenvectors (\Cref{first_inequality}). In contrast to the classical Rayleigh theorem, the tropical version is valid even if the matrix is not symmetric.

The remainder of this paper is organized as follows. Section 2 reviews the necessary preliminaries, covering tropical algebra, characteristic polynomials and tropical eigenvalues. Section 3 is devoted to the definition and properties of the tropical numerical range. In Section 4, we introduce and analyze the properties of the generalized tropical eigenvector. Section 5 presents our efficient algorithm for the computation of these generalized eigenvectors. Section 6 establishes the tropical Rayleigh quotient upper bound for tropical algebraic eigenvalues. Finally, Section 7 concludes the paper, summarizing our contributions and outlining future work.

\section{preliminaries}

In this section, we review fundamental definitions and results concerning tropical algebra, specifically focusing on tropical polynomials and their roots. We further discuss the critical distinction between tropical geometric and algebraic eigenvalues. For more information see \cite{baccelli1992synchronization,butkovivc2010max,akian2024}.

\subsection{Tropical Algebra, polynomials, and roots}

The Tropical Semiring, denoted $\rmax$, is the set $\mathbb{R} \cup \{-\infty\}$ equipped with two operations: tropical addition ($a \oplus b = \max\{a, b\}$) and tropical multiplication ($a \otimes b = a + b$).     The identity element for tropical addition (the zero element, $\varepsilon$) is $-\infty$.
The following identities are fundamental in this paper: the neutral element ($\varepsilon$) is absorbing for $\otimes$ ($a \otimes \varepsilon = \varepsilon$); the tropical power $a^{\otimes b}$ corresponds to standard multiplication ($a \times b$) for $a, b \neq \varepsilon$; and the tropical inverse of any non-$\varepsilon$ element $a$ is its negation ($a^{\otimes -1} = -a$).

A \textit{formal polynomial} $P$ over $\rmax$ (a max-polynomial) is defined like in the classical algebra. To any formal polynomial $P$ of degree $n$, we associate a \textit{polynomial function}. 
\begin{definition}[Tropical Roots]
The non-$\varepsilon$ \new{tropical roots} of $P$ are defined as the points at which the maximum of the associated polynomial function is attained at least twice (i.e., by at least two distinct monomials). If $P$ lacks a constant term, then $P$ is said to possess a tropical root at $\varepsilon$.
\end{definition}
Equivalently, the non-$\varepsilon$ tropical roots of $P$ correspond precisely to the points of non-differentiability of the associated polynomial function when restricted to $\R$. A well-known result states that every formal polynomial $P$ over $\rmax$ of degree $n$ has exactly $n$ tropical roots \cite{cuninghame1980algebra}.

\subsection{Tropical Matrix Operations and Characteristic Polynomial}

Extending the tropical operations $\oplus$ and $\otimes$ to vectors and matrices enables the definition of classical linear algebra constructs: matrix--matrix, matrix--vector, and matrix--scalar products, as well as matrix addition, all analogous to classical linear algebra. We denote the set of $m \times n$ matrices over $\rmax$ by $\rmax^{m \times n}$ and the set of $n \times 1$ vectors by $\rmax^n$. Also, for $n \in \N$, $[n]$ denotes the set $\{1, \ldots,n\}$.

\begin{definition}[Tropical Algebraic Eigenvalue] \label{algebraic}
Let $A \in \rmax^{ n \times n}$. The \new{tropical algebraic eigenvalues} of $A$, denoted $\lambda_{1}(A)\leq \cdots\leq \lambda_{n}(A)$, are the tropical roots of its tropical characteristic polynomial $P_A$.
\end{definition}

The term \emph{algebraic} is employed here because a tropical algebraic eigenvalue $\lambda$ is not guaranteed to satisfy the eigenvalue-eigenvector equation $A\otimes x = \lambda\otimes x$ for some non-zero vector $x \in \rmax^{n}$ (i.e., $x \neq (\varepsilon, \ldots, \varepsilon)^T$).

\begin{remark}
Every tropical geometric eigenvalue is a tropical algebraic eigenvalue, but the converse is not always true.
 \end{remark}
 \begin{remark}
 The greatest tropical algebraic eigenvalue is equal to the greatest tropical geometric eigenvalue.
  \end{remark}
\section{Tropical numerical range}
In this section, we recall several essential results from \cite{tavakolipour2019numerical}. Throughout the following discussion, we use the max‑norm of a vector 
$v=(v_i)\in \rmax^n$, with $v  \neq (\varepsilon, \ldots, \varepsilon)^T$, as introduced in \cite[p.60]{butkovivc2010max}, defined by
\[\Vert v \Vert = \max \{v_1, \ldots, v_n\}\;.\]
A vector $v$ is said to be \new{scaled} whenever $\Vert v\Vert=0$. Assume that $\max\{v_1, \ldots, v_n\} = v^*$. Then the vector 
\begin{equation}\label{scaled}(v^*)^{\otimes-1} \otimes v =(v_1 - v^*, \ldots, v_n - v^*)\;,\end{equation}
 represents the scaled form of $v$.

\begin{definition}[Tropical numerical range]\label{nr_def}
Suppose that $A \in \mathbb{R}_{\max}^{n \times n}$. The tropical numerical range (field of values) of $A$ is defined as
\[
F_{\max}(A):=\{ (x^{T} \otimes A \otimes x) \otimes(x^{T} \otimes x)^{\otimes -1}: x \in \mathbb{R}_{\max}^{n}, x  \neq (\varepsilon, \ldots, \varepsilon)^T\}.
\]
\end{definition}
\noindent Note that 
\begin{equation}\label{expand}
x^{T} \otimes A \otimes x=\max\limits_{i,j \in [n]} x_{i}+a_{ij}+x_{j}.
\end{equation}
and by letting $x^*=\max\{x_1, \ldots, x_n\}$
\begin{equation}\label{expand2}
x^{T} \otimes x=2x^*.
\end{equation}
\begin{lemma}\label{normal} Let \( A \in \mathbb{R}_{\max}^{n \times n} \). Then the set of values \( \{x^T \otimes A \otimes x : x \in \mathbb{R}_{\max}^n, \Vert x \Vert = 0\} \) is equal to \( F_{\max}(A) \). \end{lemma}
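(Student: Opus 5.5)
The plan is to prove the two inclusions separately, using the scaling map \eqref{scaled} to pass between an arbitrary non-zero vector and a scaled one. Throughout, write $q(x):=(x^{T}\otimes A\otimes x)\otimes(x^{T}\otimes x)^{\otimes -1}$ for a non-zero $x\in\rmax^n$. This quantity is well defined because, by \eqref{expand2}, $x^T\otimes x=2x^*$, and $x^*$ is finite whenever $x\neq(\varepsilon,\ldots,\varepsilon)^T$.

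First, let $x$ satisfy $\Vert x\Vert=0$. Then $x^*=0$, so $x^T\otimes x=0$, which is the tropical unit, and hence $q(x)=x^T\otimes A\otimes x$. Since a scaled vector is in particular non-zero, this value belongs to $F_{\max}(A)$. This proves that the set of values $\{x^T\otimes A\otimes x:\Vert x\Vert=0\}$ is contained in $F_{\max}(A)$.

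For the reverse inclusion, take an arbitrary non-zero $x$ with $x^*=\max_i x_i$, and let $y=(x^*)^{\otimes -1}\otimes x$ be its scaled form from \eqref{scaled}. The key step is the homogeneity identity $q(c\otimes x)=q(x)$ for any finite scalar $c$. By \eqref{expand},
\[
(c\otimes x)^T\otimes A\otimes(c\otimes x)=\max_{i,j}\,(c+x_i+a_{ij}+c+x_j)=2c+x^T\otimes A\otimes x,
\]
and by \eqref{expand2}, $(c\otimes x)^T\otimes(c\otimes x)=2(c+x^*)=2c+x^T\otimes x$. The $2c$ terms cancel in the quotient. This works even when $x^T\otimes A\otimes x=\varepsilon$, since adding a finite number to $-\infty$ still gives $-\infty$. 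Taking $c=-x^*$ yields $q(x)=q(y)=y^T\otimes A\otimes y$ with $\Vert y\Vert=0$. Hence every element of $F_{\max}(A)$ is attained by a scaled vector, and the two sets are equal.

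There is no real obstacle in this argument. The only points needing care are that $x^*$ is finite for non-zero $x$, which is what makes the tropical inverse legitimate, and the $\varepsilon$-absorption convention in the degenerate case.
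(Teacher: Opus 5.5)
Your proof is correct. The paper recalls this lemma from earlier work without giving a proof, but the rescaling step in its proof of \Cref{exist} (subtracting $2x^*$ and passing to $y_i = x_i - x^*$) is exactly your homogeneity argument, so your route is essentially the paper's.
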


\begin{theorem}\label{interval}
Let $A= (a_{ij})\in \mathbb{R}_{\max}^{n\times n}$. We have
\[F_{\max}(A)=\bigg[\min \limits_{i \in [n] }a_{ii},\max \limits_{i,j \in  [n]}a_{ij}\bigg].\]
\end{theorem}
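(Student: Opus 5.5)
By \Cref{normal} it is enough to show that the set $S:=\{x^T\otimes A\otimes x : \Vert x\Vert = 0\}$ equals the interval $[m,M]$, where $m:=\min_i a_{ii}$ and $M:=\max_{i,j}a_{ij}$. The plan is to prove the two inclusions separately. For $S\subseteq[m,M]$ I would use the expansion \eqref{expand}; for $[m,M]\subseteq S$ I would build an explicit one-parameter family of scaled vectors whose value moves continuously from $m$ to $M$.

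\emph{Step 1: $S\subseteq[m,M]$.}
Take a scaled vector $x$, so every $x_i\le 0$ and some index $k$ has $x_k=0$.
\begin{itemize}
\item Upper bound: each term satisfies $x_i+a_{ij}+x_j\le a_{ij}\le M$.
\item Lower bound: the diagonal term for $i=j=k$ equals $a_{kk}\ge m$, so the maximum in \eqref{expand} is at least $m$.
\end{itemize}
If $M=\varepsilon$, then $A$ is the zero matrix and the statement is trivial, so assume $M$ is finite. The case $m=\varepsilon$ needs a little care: the interval then contains $\varepsilon$, and the value $\varepsilon$ is attained by $x=e_k$ whenever $a_{kk}=\varepsilon$.

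\emph{Step 2: $[m,M]\subseteq S$.}
Fix indices $p,q$ with $a_{pq}=M$, and an index $k$ with $a_{kk}=m$.
\begin{itemize}
\item The vector $e_k$ (zero in position $k$, $\varepsilon$ elsewhere) gives the value $m$.
\item The vector with zeros in positions $p$ and $q$ and $\varepsilon$ elsewhere gives $\max(a_{pp},a_{pq},a_{qp},a_{qq})=M$, since $M$ is the largest entry.
\end{itemize}
For intermediate values I would interpolate. Set $x_k=0$, $x_p=x_q=-t$ with $t\ge 0$, and all other entries $\varepsilon$. (When $k\in\{p,q\}$, the coordinate $k$ stays $0$.) Then
\[
f(t)=x^T\otimes A\otimes x
\]
is a maximum of finitely many terms of the form $c-t$, $c-2t$, and the constant $a_{kk}=m$. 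Hence $f$ is continuous and nonincreasing in $t$. Moreover:
\begin{itemize}
\item $f(0)$ involves only the entries indexed by $\{k,p,q\}$, and includes $a_{pq}$, so $f(0)=M$.
\item For large $t$, all terms other than the constant die off, so $f(t)\to m$.
\end{itemize}
By the intermediate value theorem, $f$ attains every value in $[m,M]$ when $m$ is finite. When $m=\varepsilon$, the limit is $\varepsilon$ and the intermediate value theorem gives every value in $(\varepsilon,M]$; the endpoint $\varepsilon$ is supplied by $e_k$.

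\emph{Main obstacle.}
The only delicate point is the bookkeeping in the interpolation. One must check that the continuous family indeed starts at $M$, which uses the fact that $M$ dominates all entries including $a_{kk}$. One must also check that the family ends at $m$, which requires the only surviving term for large $t$ to be the diagonal entry $a_{kk}$; this holds because every other term carries at least one $-t$. The cases where $k$ coincides with $p$ or $q$, and where $m=\varepsilon$, are handled by the same construction with minor adjustments.
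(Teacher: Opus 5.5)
Your proof is correct and complete. The paper itself does not prove this statement: it is quoted from \cite{tavakolipour2019numerical} and used as a black box, so there is no in-paper argument to compare against line by line. Your inclusion $S\subseteq[m,M]$ is the natural one: bound every term by $a_{ij}$, and use the diagonal term at an index with $x_k=0$. Your reverse inclusion, via the family $x_k=0$, $x_p=x_q=-t$ and the intermediate value theorem, is sound and handles the degenerate cases $k\in\{p,q\}$, $M=\varepsilon$ and $m=\varepsilon$ correctly. Since $f(t)\ge M-2t$ when $M$ is finite, $f$ stays real-valued, so continuity on $[0,\infty)$ is unproblematic; monotonicity is not even needed.

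The closest thing in the paper is the pair of explicit constructions in \Cref{leq} and \Cref{the_second}. Their proofs never use that $\lambda$ is an eigenvalue, only that $m\le\lambda\le M$. If $\lambda\ge\max_i a_{ii}$, \Cref{leq} applied to a pair with $a_{pq}=M$ gives a scaled vector supported on $\{p,q\}$ with value $\lambda$. Otherwise some $a_{qq}>\lambda$, and \Cref{the_second} with $a_{pp}=m$ does the same. Read this way, those results give a closed-form proof of $[m,M]\subseteq F_{\max}(A)$ for finite $\lambda$, with no continuity argument and with witnesses on only two coordinates, at the cost of a case split. Your continuity argument avoids the case analysis and is quicker to check. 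Since your $f$ is piecewise linear, you could also solve $f(t)=\lambda$ explicitly if you wanted a constructive version.
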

\begin{theorem}\label{contain_eigs} If \( A \in \mathbb{R}_{\max}^{n \times n} \), then each tropical algebraic eigenvalue of \( A \) is an element of \( F_{\max}(A) \). \end{theorem}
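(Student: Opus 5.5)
We will show that every tropical algebraic eigenvalue of $A$ lies in the interval $\bigl[\min_i a_{ii},\max_{i,j}a_{ij}\bigr]$. By \Cref{interval} this interval is exactly $F_{\max}(A)$, so the claim follows. The plan is to analyse the tropical characteristic polynomial
\[P_A(\lambda)=\bigoplus_{k=0}^{n} c_k\otimes\lambda^{\otimes k},\]
where $c_n=0$ and $c_k$ is the maximal weight of a permutation-type selection, that is, of a multi-circuit covering $n-k$ vertices. Equivalently, $c_k$ is the largest sum of an $(n-k)\times(n-k)$ principal permanent (the tropical permanent of a principal submatrix). Throughout write $m=\min_i a_{ii}$ and $M=\max_{i,j}a_{ij}$.

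\emph{Upper bound.} Each $c_k$ is a sum of $n-k$ entries of $A$, so $c_k\le (n-k)M$. For $\lambda>M$ we get $c_k+k\lambda<(n-k)\lambda+k\lambda=n\lambda$ for every $k<n$. Hence the monomial $\lambda^{\otimes n}$ is the unique maximiser of $P_A(\lambda)$, and no root exceeds $M$. This is also consistent with the known fact that the largest algebraic eigenvalue equals the maximal cycle mean, which is at most $M$.

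\emph{Lower bound.} If $m=\varepsilon$, there is nothing to prove. Otherwise all diagonal entries are finite, and taking the identity permutation on any $n-k$ indices gives $c_k\ge (n-k)m>-\infty$; in particular $c_0\ge nm$. For $\lambda<m$, and for each $k\ge1$, the identity selection then shows
\[c_{k-1}+(k-1)\lambda\;\ge\; c_k' \,+\,(k-1)\lambda,\]
and this must be compared with $c_k+k\lambda$.

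Here a cruder comparison is enough and avoids fiddling with individual coefficients. Note that $P_A(\lambda)\ge c_0\ge nm$. We therefore need $c_k+k\lambda<c_0$ for all $k\ge1$. Observe that $c_0\ge c_k+km$: extend an optimal selection on $n-k$ vertices by the diagonal entries on the remaining $k$ vertices. Then $c_k+k\lambda<c_k+km\le c_0$, so the constant term is the unique maximiser for $\lambda<m$, and $P_A$ has no root below $m$. Since $c_0$ is finite, $\varepsilon$ is not a root either.

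\emph{Conclusion.} All roots lie in $[m,M]$, which equals $F_{\max}(A)$ by \Cref{interval}. The only delicate point is the lower bound: the extension argument showing $c_0\ge c_k+km$, and the degenerate case where $\min_i a_{ii}=\varepsilon$ (in which case the interval contains $\varepsilon$ and any root, including one at $\varepsilon$, is covered).
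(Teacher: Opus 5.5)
Your proof is correct. The paper gives no argument for \Cref{contain_eigs} to compare it with; the result is simply quoted from \cite{tavakolipour2019numerical}.

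Your route is a self-contained localization of the roots of $P_A$. You use two monomial-dominance estimates. The bound $c_k\le (n-k)\max_{i,j}a_{ij}$ makes $\lambda^{\otimes n}$ the unique maximizer for $\lambda>\max_{i,j}a_{ij}$. The extension-by-diagonal inequality $c_0\ge c_k+k\min_i a_{ii}$ makes $c_0$ the unique maximizer for $\lambda<\min_i a_{ii}$, and also rules out a root at $\varepsilon$. You then invoke the inclusion $[\min_i a_{ii},\max_{i,j}a_{ij}]\subseteq F_{\max}(A)$, which is the nontrivial half of \Cref{interval}.

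What this buys you is the explicit combinatorial bound $\min_i a_{ii}\le\lambda_1\le\cdots\le\lambda_n\le\max_{i,j}a_{ij}$. That is exactly what the constructions of Section 5 rely on. \Cref{leq} needs an entry $a_{pq}\ge\lambda$. The proof of \Cref{the_second} needs $a_{pp}\le\lambda$, which the paper justifies ``by \Cref{interval}'' but which tacitly uses \Cref{contain_eigs}; your inequality $c_0\ge c_k+k\min_i a_{ii}$ supplies it directly. With your bounds in hand, the explicit two-support vectors of \Cref{leq} and \Cref{the_second} witness any finite $\lambda$ in that interval as a point of $F_{\max}(A)$, so you could even bypass \Cref{interval}.

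One cosmetic fix: delete the abandoned sentence in your lower-bound paragraph that involves the undefined quantity $c_k'$. The ``cruder comparison'' that follows it is the actual argument and does not depend on it.
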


\section{Generalized tropical eigenvector}
In this section, following the statement of \Cref{exist}, we define the \new{generalized tropical eigenvector}. Subsequently, in \Cref{leq,the_second}, we derive explicit, computationally cheap formulas for determining this generalized tropical eigenvector. These derived formulas are conveniently summarized in \Cref{fig_formula}.

\begin{theorem}\label{exist}
Let $A \in \mathbb{R}_{\max}^{n \times n}$ and $\lambda \in \rmax$ be a tropical algebraic eigenvalue of $A$. Then there exists $x=(x_i) \in \rmax^n$ such that $x  \neq (\varepsilon, \ldots, \varepsilon)^T$ and the following equation holds:
\[x^{T} \otimes  A\otimes    x = \lambda\otimes x^T\otimes x\;.\]
Furthermore, let $y$ denote the scaled form of $x$. Then we have  
\[ y^{T} \otimes A   \otimes y = \lambda\;.\]
\end{theorem}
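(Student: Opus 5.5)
By \Cref{contain_eigs}, $\lambda$ lies in $F_{\max}(A)$. By \Cref{nr_def}, this means there is a nonzero $x\in\rmax^n$ with
\[(x^T\otimes A\otimes x)\otimes (x^T\otimes x)^{\otimes -1}=\lambda.\]
Since $x\neq(\varepsilon,\dots,\varepsilon)^T$, we have $x^T\otimes x=2x^*$ with $x^*$ finite, by \eqref{expand2}. So we may tropically multiply both sides by $x^T\otimes x$, which gives $x^T\otimes A\otimes x=\lambda\otimes x^T\otimes x$. That settles the first claim.

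One degenerate case must be handled first: $\lambda=\varepsilon$. \Cref{interval} then forces $\min_i a_{ii}=\varepsilon$, so pick $i$ with $a_{ii}=\varepsilon$. Take $x=e_i$, the vector with $0$ in position $i$ and $\varepsilon$ elsewhere. Then $x^T\otimes A\otimes x=a_{ii}=\varepsilon=\lambda\otimes 0$, so the equation holds directly.

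I would rather not lean entirely on the black-box \Cref{contain_eigs}. For a self-contained construction, I would take $\lambda\in[\min_i a_{ii},\max_{i,j}a_{ij}]$, choose $k$ with $a_{kk}=\min_i a_{ii}$ and $(p,q)$ with $a_{pq}$ maximal, and search for a scaled $y$ supported on $\{k,p,q\}$. The candidate is $y_k=0$ and $y_p=y_q=t\le 0$ (or $y_p=0$ when $p=k$). The quantity $\max_{i,j}(y_i+a_{ij}+y_j)$ is then continuous and nondecreasing in $t$. It equals $a_{kk}$ as $t\to-\infty$ and is at least $a_{pq}$ at $t=0$, so the intermediate value theorem produces the needed $t$. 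The cross terms such as $a_{kp}+t$ need monitoring, but since every entry is at most $a_{pq}$ the function stays bounded by $a_{pq}$ and remains continuous. This is essentially the proof of \Cref{interval}, and it is the only place where real work occurs. I would invoke \Cref{interval} and \Cref{contain_eigs} as given.

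For the second claim, let $x^*=\Vert x\Vert$ and $y=(x^*)^{\otimes -1}\otimes x$ as in \eqref{scaled}. Substituting into \eqref{expand} gives $y^T\otimes A\otimes y=\max_{i,j}(x_i-x^*+a_{ij}+x_j-x^*)=x^T\otimes A\otimes x-2x^*$. By the first part and \eqref{expand2}, this equals $\lambda+2x^*-2x^*=\lambda$. This is the homogeneity that underlies \Cref{normal}. I expect no real obstacle beyond the bookkeeping of the $\varepsilon$ case.
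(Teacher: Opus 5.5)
Your proof is correct and follows the paper's argument. Like the paper, you take $x$ from \Cref{contain_eigs} together with \Cref{nr_def}, clear the finite factor $x^T\otimes x=2x^*$, and subtract $2x^*$ inside \eqref{expand} to obtain the identity for the scaled vector; your separate treatment of $\lambda=\varepsilon$ and the IVT sketch are harmless but unnecessary, since $x^T\otimes x$ is always finite and the general argument already covers that case.
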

\begin{proof}
The existence of $x \in \mathbb{R}_{\max}^n$ such that $x  \neq (\varepsilon, \ldots, \varepsilon)^T$ is guaranteed by Theorem \ref{contain_eigs}. Furthermore, by defining $x^* = \max\{x_1, \ldots, x_n\}$ and applying results from \Cref{expand,expand2}, we have the following equation
\[\max_{i,j\in [n]}x_{i}+a_{ij}+x_{j}=\lambda+2x^*\;.\]
Rearranging the terms yields:
\[\max_{i,j\in [n]} x_{i}+a_{ij}+x_{j}-2x^*=\lambda\;.\]
Consequently, by defining the scaled vector $y = (y_i)$ according to \Cref{scaled} such that $y_i = x_i - x^*$, the equation transforms into
\[\max_{i,j\in [n]} (x_{i}-x^*)+a_{ij}+(x_{j}-x^*)=\max_{i,j\in [n]}  y_i+a_{ij}+y_j=\lambda\;,\]
which successfully yields the desired result.
\end{proof}
\begin{definition}[Generalized tropical eigenvector]
Let $A \in \mathbb{R}_{\max}^{n \times n}$ and $\lambda \in \rmax$ be a tropical algebraic eigenvalue of $A$. Then the vector $x \in \rmax^n, x  \neq (\varepsilon, \ldots, \varepsilon)^T$
 satisfying
 \[x^{T} \otimes  A\otimes    x = \lambda\otimes x^T\otimes x\;,\]
is called a generalized tropical eigenvector associated with the eigenvalue $\lambda$.
\end{definition}

\section{Computation of generalized tropical eigenvectors}

In what follows, we derive an explicit formula for the generalized tropical eigenvector corresponding to the tropical eigenvalues of $A \in \mathbb{R}_{\max}^{n \times n}$. To ensure clarity and ease of understanding, we summarize everything briefly in \Cref{fig_formula}. Let $A=(a_{ij})\in \rmax^{n\times n}$. In \Cref{leq}, we investigate the conditions under which all diagonal entries of $A$ are less than or equal to its tropical algebraic eigenvalue $\lambda$. The theorem also requires the existence of an entry $a_{pq}$ such that $a_{pq}\geq \lambda$. We note that this condition is always satisfied, since by \Cref{interval}, the maximum tropical algebraic eigenvalue of 
$A$ does not exceed the maximum entry of $A$. \begin{figure}[!h]
\begin{center}
\scriptsize
\begin{forest}
  for tree={
    align=center,
    parent anchor=south,
    child anchor=north,
    font=\sffamily,
    edge={thick, -{Stealth[]}},
    l sep+=10pt,
    edge path={
      \noexpand\path [draw, \forestoption{edge}] (!u.parent anchor) -- +(0,-10pt) -| (.child anchor)\forestoption{edge label};
    },
    if level=0{
      inner xsep=0pt,
      tikz={\draw [thick] (.south east) -- (.south west);}
    }{}
  }
  [ $A\mbox{\;=:}( a_{ij})\in \rmax^{n\times n}\mbox{,\;}\; \lambda \in \rmax\mbox{,\;} \;x \mbox{=\;} (x_{i}) \in \rmax^n$
    [\Cref{leq}:\; $(\forall i\in{[n]} \mbox{,\;} a_{ii}\leq \lambda )$
        [$a_{pq}\geq \lambda \mbox{,\;} a_{pq}\geq a_{qp}$
            [$x_p\mbox{=\;}0 \mbox{,\;} x_q\mbox{=\;}\lambda- a_{pq}$]
        ]
        [$a_{pq}\geq \lambda \mbox{,\;}a_{pq}\leq a_{qp}$
            [$x_q\mbox{=\;}0 \mbox{,\;} x_p\mbox{=\;}\lambda- a_{qp}$]
        ]
    ] 
    [\Cref{the_second} : $(a_{qq}\geq \lambda\mbox{,\;}\min_{i\in [n]}a_{ii}\mbox{=\;} a_{pp})$
        [ $\lambda+a_{qq}\leq 2\max\{a_{pq}\mbox{,}a_{qp}\}$
            [ $x_p\mbox{=\;}0\mbox{,\;}x_q \mbox{=\;} \lambda - \max\{a_{pq}\mbox{,}a_{qp}\}$]
        ] 
        [$\lambda+a_{qq}\geq 2\max\{a_{pq}\mbox{,}a_{qp}\}$
            [ $x_p\mbox{=\;}0\mbox{,\;}x_q\mbox{=\;} \frac{\lambda - a_{qq}}{2}$]
        ]
    ] 
  ] 
\end{forest}
\end{center}
\caption{Explicit formula for the generalized tropical eigenvector corresponding to the eigenvalue $\lambda$.}\label{fig_formula}
\end{figure}
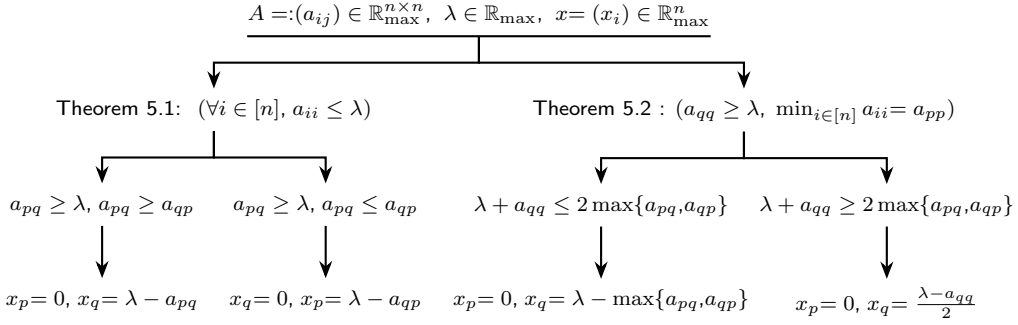

\begin{theorem}\label{leq}
Let $A=(a_{ij})\in \rmax^{n\times n}$ be a matrix with a tropical eigenvalue $\lambda \in \rmax$, such that $a_{ii}\leq \lambda$ for all $i\in[n]$.
\begin{enumerate}
    \item If $p,q\in[n]$ satisfy $a_{pq}\geq \lambda$ and $a_{pq}\geq a_{qp}$, define $x=(x_i)\in \rmax^n$ by $x_p=0$, $x_q=\lambda- a_{pq}$, and $x_{i}=\varepsilon$ for $i \in[n]\setminus{p,q}$.
    \item If $p,q\in[n]$ satisfy $a_{pq}\geq \lambda$ and $a_{qp}\geq a_{pq}$, define $x=(x_i)\in \rmax^n$ by $x_q=0$, $x_p=\lambda- a_{qp}$, and $x_{i}=\varepsilon$ for $i \in[n]\setminus{p,q}$.
\end{enumerate}
In both cases, $x$ is a generalized tropical eigenvector in $\rmax^n$ associated with $\lambda$, with $\Vert x\Vert=0$.
\end{theorem}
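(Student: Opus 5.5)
The plan is a direct verification. The vector $x$ has at most two finite entries, so the quadratic form \eqref{expand} reduces to a maximum of at most four terms, and each of these can be compared with $\lambda$. By \eqref{expand2} and the fact that $\Vert x\Vert=0$, the right-hand side $\lambda\otimes x^T\otimes x$ equals $\lambda+2\cdot 0=\lambda$. So it suffices to show $\Vert x\Vert=0$ and $x^T\otimes A\otimes x=\lambda$.

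\emph{Case (1).} First I check the scaling. Since $a_{pq}\geq\lambda$, we get $x_q=\lambda-a_{pq}\leq 0=x_p$, hence $\Vert x\Vert=0$ and $x\neq(\varepsilon,\ldots,\varepsilon)^T$. If $p=q$, then $\lambda\leq a_{pp}\leq\lambda$ forces $a_{pp}=\lambda$, so $x_p=0$ consistently and $x^T\otimes A\otimes x=a_{pp}=\lambda$.

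For $p\neq q$, only the indices $i,j\in\{p,q\}$ contribute to $\max_{i,j}x_i+a_{ij}+x_j$. I evaluate the four terms separately:
\begin{itemize}
\item $(p,p)$: $a_{pp}\leq\lambda$ by hypothesis.
\item $(p,q)$: $0+a_{pq}+\lambda-a_{pq}=\lambda$, so the value $\lambda$ is attained.
\item $(q,p)$: $a_{qp}+\lambda-a_{pq}\leq\lambda$, because $a_{pq}\geq a_{qp}$.
\item $(q,q)$: $a_{qq}+2(\lambda-a_{pq})\leq\lambda+2(\lambda-a_{pq})\leq\lambda$, using $a_{qq}\leq\lambda$ and $a_{pq}\geq\lambda$.
\end{itemize}
Hence the maximum equals $\lambda$, which matches the right-hand side.

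\emph{Case (2).} Here $a_{qp}\geq a_{pq}\geq\lambda$. So the pair $(q,p)$ satisfies the hypotheses of case (1) with the roles of $p$ and $q$ exchanged. The vector defined in (2) is exactly the case (1) vector for the swapped pair, and the same four-term check applies verbatim.

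\emph{Degenerate case $\lambda=\varepsilon$.} The only delicate point is this case, where an expression like $\lambda-a_{pq}$ may be ill-defined when $a_{pq}=\varepsilon$. I handle it by interpreting $x_q=\varepsilon$. Then every diagonal entry is $\varepsilon$, and every term in the maximum is $\varepsilon$: either it contains $x_q=\varepsilon$, or it is $a_{pp}=\varepsilon$. Thus both sides equal $\varepsilon$ while $x_p=0$ keeps $x$ nonzero and scaled.

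Apart from this convention, I expect no real obstacle; the main care needed is in the $(q,q)$ term, where both hypotheses $a_{qq}\leq\lambda$ and $a_{pq}\geq\lambda$ are used together.
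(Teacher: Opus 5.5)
Your proposal is correct and follows essentially the same route as the paper: a direct evaluation of the at most four surviving terms $x_i+a_{ij}+x_j$ with $i,j\in\{p,q\}$, each bounded by $\lambda$ and the $(p,q)$ term attaining it, plus a separate check of $p=q$, where $a_{pp}=\lambda$ is forced. Your reduction of case (2) to case (1) by swapping $p$ and $q$, and your explicit treatment of $\lambda=\varepsilon$, are small refinements of points the paper either calls ``analogous'' or does not address.
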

\begin{proof}
The proof for the first case is presented; the second case follows by analogous arguments.
By construction, $x_p=0$ and $x_q\le 0$ (since $x_q=\lambda-a_{pq}\le 0$), and all other entries of $x$ are equal to $\varepsilon$. Hence, the maximum entry of $x$ is $0$, so that  $\Vert x\Vert=0$.
On the other hand,  by \Cref{expand} for $p\neq q$ we have 
\begin{eqnarray}x^T \otimes A \otimes x &=& \max_{i,j \in [n]}\{x_i +a_{ij}+x_j\} \nonumber\\
&=& \max \{x_p+x_q+a_{pq},x_q+x_p+a_{qp}, 2x_p+a_{pp},2 x_q+a_{qq}, \varepsilon\}\nonumber\\
&=&\max\{\lambda , \lambda-a_{pq}+a_{qp}, a_{pp}, 2\lambda-2a_{pq}+a_{qq}\}= \lambda, \nonumber\end{eqnarray}
where the last equality follows from the fact that
\[ \lambda-a_{pq} +a_{qp}  \leq \lambda,\; a_{pp}  \leq \lambda, \;2\lambda-2a_{pq}+a_{qq}  \leq \lambda.\]
Now, assume $p=q$. Then $a_{pq} = a_{pp} = a_{qq} \ge \lambda$ and also $a_{pq} = a_{pp} = a_{qq} \le \lambda$, so 
\begin{equation}\label{equal}a_{pq} = a_{pp} = a_{qq}=\lambda.\end{equation}
Hence $x_p = x_q = 0$ and $x_i = \varepsilon$ for $i \neq p = q$. Therefore,
\begin{eqnarray}x^T \otimes A \otimes x &=& \max_{i,j \in [n]}\{x_i +a_{ij}+x_j\} \nonumber\\
&=& \max \{x_p+x_q+a_{pq},x_q+x_p+a_{qp}, 2x_p+a_{pp},2 x_q+a_{qq}, \varepsilon\}\nonumber\\
&=&\max\{a_{pq}, a_{qp},a_{pp},a_{qq}, \varepsilon\}=\lambda\;,\nonumber\end{eqnarray}
where the last equality is by \Cref{equal}. This establishes the result.
\end{proof}
In the following theorem, we consider the case in which at least one diagonal entry of 
$A$ is greater than  or equal to $\lambda$.

\begin{theorem}\label{the_second}
Let $A=(a_{ij})\in \rmax^{n\times n}$ have tropical algebraic eigenvalue $\lambda \in \rmax$. Assume that there exists $q\in[n]$ such that $a_{qq}\geq \lambda$, and let $p\in[n]$ satisfy
\[\min_{i\in [n]}a_{ii}=a_{pp}.\]
Define $x_p=0$ and $x_i=\varepsilon$ for all $i\in[n]\setminus\{p,q\}$. To determine $x_q$, consider the following two cases:
\begin{enumerate}
\item If $\lambda+a_{qq}\le 2\max\{a_{pq},a_{qp}\}$, set
\[x_q = \lambda - \max\{a_{pq}, a_{qp}\}.\]
\item If $\lambda+a_{qq}\geq2\max\{a_{pq},a_{qp}\}$, set
\[x_q = \frac{\lambda - a_{qq}}{2}.\]
\end{enumerate}
Then, the vector $x=(x_i)\in\rmax^n$ is a generalized tropical eigenvector associated with $\lambda$, such that $\Vert x \Vert =0$.
\end{theorem}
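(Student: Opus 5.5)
The plan is to compute $x^T\otimes A\otimes x$ directly via \Cref{expand}. The support of $x$ is contained in $\{p,q\}$, so only four terms survive:
\[
2x_p+a_{pp},\qquad x_p+a_{pq}+x_q,\qquad x_q+a_{qp}+x_p,\qquad 2x_q+a_{qq}.
\]
With $x_p=0$ and $M:=\max\{a_{pq},a_{qp}\}$, the maximum of these is $\max\{a_{pp},\; x_q+M,\; 2x_q+a_{qq}\}$. I will show in each case that this equals $\lambda$ and that $x_q\le 0$. Since $x_p=0$, the latter gives $\Vert x\Vert=0$, so $x^T\otimes x=0$ and the defining equation of a generalized tropical eigenvector reduces to $x^T\otimes A\otimes x=\lambda$.

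\textbf{Preliminary inequality.} I need $a_{pp}\le\lambda$. This follows from \Cref{contain_eigs} together with \Cref{interval}: $\lambda\in F_{\max}(A)$, so $\lambda\ge\min_i a_{ii}=a_{pp}$. Together with the hypothesis $a_{qq}\ge\lambda$, this is all the ordering information the proof uses.

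\textbf{Case 1.} Here $x_q=\lambda-M$.
\begin{itemize}
\item The cross term gives $x_q+M=\lambda$.
\item The diagonal term gives $2x_q+a_{qq}=2\lambda-2M+a_{qq}$, which is $\le\lambda$ exactly when $\lambda+a_{qq}\le 2M$, i.e.\ the case hypothesis.
\item For $x_q\le0$, I need $M\ge\lambda$. This follows from $2M\ge\lambda+a_{qq}\ge 2\lambda$.
\end{itemize}

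\textbf{Case 2.} Here $x_q=(\lambda-a_{qq})/2$.
\begin{itemize}
\item $x_q\le 0$ is immediate from $a_{qq}\ge\lambda$.
\item The diagonal term gives $2x_q+a_{qq}=\lambda$.
\item The cross term satisfies $x_q+M=(\lambda-a_{qq})/2+M\le\lambda$, which is equivalent to $2M\le\lambda+a_{qq}$, the case hypothesis.
\end{itemize}
In both cases, then, the maximum is attained and equals $\lambda$, because $a_{pp}\le\lambda$.

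\textbf{Degenerate situations.} These are the main nuisance rather than a real obstacle.
\begin{itemize}
\item \emph{$p=q$.} Then $a_{pp}\le\lambda\le a_{qq}=a_{pp}$ forces $a_{pp}=\lambda$. Hence $M=\lambda$, and both formulas give $x_q=0$, consistent with $x_p=0$. The computation reduces to $x^T\otimes A\otimes x=a_{pp}=\lambda$, as in the $p=q$ part of the proof of \Cref{leq}.
\item \emph{Entries equal to $\varepsilon$.} If $\lambda$ or some relevant entries equal $\varepsilon$, the arithmetic (subtractions and halving) must be read with the convention that $x_q=\varepsilon$ is allowed. If $\lambda=\varepsilon$, then $a_{pp}=\varepsilon$. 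Case 1 then yields $x_q=\varepsilon$ when $M\neq\varepsilon$, or one checks directly that all terms are $\varepsilon$.
\end{itemize}
I would treat these briefly by checking that each surviving term is $\le\lambda$ and one equals $\lambda$, or all are $\varepsilon=\lambda$.
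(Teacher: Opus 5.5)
Your proposal is correct and follows the paper's proof essentially step for step. You restrict to the four terms supported on $\{p,q\}$ and obtain $a_{pp}\le\lambda$ from the numerical-range containment. In each case you check that one term equals $\lambda$ while the case hypothesis bounds the others by $\lambda$, and you treat $p=q$ separately, as the paper does. Citing \Cref{contain_eigs} together with \Cref{interval} for $a_{pp}\le\lambda$ (the paper cites only \Cref{interval}), and noting the $\varepsilon$ edge cases, add some care but do not change the route.
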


\begin{proof}
\textbf{Proof of the first part: }First let $p\neq q$. Since $\lambda +a_{qq}\leq 2\max\{a_{pq}, a_{qp}\}$ and $ \lambda \leq a_{qq}$, so 
\begin{equation}\label{cond1}\lambda \leq \max\{a_{pq}, a_{qp}\}.
\end{equation} 
 Therefore, by the definition of $x$, we have $x_p=0$, and by \Cref{cond1}
\[
x_q=\lambda-\max\{a_{pq},a_{qp}\}\le 0 \quad,
\]
and all other components are equal to $\varepsilon$. Hence, the maximum entry of $x$ is $0$, and thus $\|x\|=0$.
Moreover,
\begin{eqnarray}
x^T \otimes A \otimes x 
&=& \max_{i,j \in [n]}\{x_i + a_{ij} + x_j\} \nonumber\\
&=& \max \big\{ x_p + x_q + a_{pq},\; x_q + x_p + a_{qp},\; 2x_p + a_{pp},\; 2x_q + a_{qq},\; \varepsilon \big\} \nonumber\\
&=& \max \Big\{ \lambda - \max\{a_{pq}, a_{qp}\} + a_{pq},\;
\lambda - \max\{a_{pq}, a_{qp}\} + a_{qp},\;
a_{pp},\;
2\lambda - 2\max\{a_{pq}, a_{qp}\} + a_{qq} \Big\} \nonumber\\
&=& \lambda\quad, \nonumber
\end{eqnarray}
where the last equality follows from the facts that
\[
\lambda - \max\{a_{pq}, a_{qp}\} + a_{pq} \le \lambda, \qquad
\lambda - \max\{a_{pq}, a_{qp}\} + a_{qp} \le \lambda\quad,
\]
and at least one of these two expressions is equal to $\lambda$. Moreover, by \Cref{interval}, we have 
\[\min_{i\in [n]}a_{ii} =a_{pp} \le \lambda\quad.\]
Finally, using the assumption in the first part, we obtain
\[
2\lambda - 2\max\{a_{pq}, a_{qp}\} + a_{qq}
= \lambda + \big(\lambda + a_{qq} - 2\max\{a_{pq}, a_{qp}\}\big)
\le \lambda.
\]
 Now assume that $p=q$.  Then $a_{pp}=a_{qq}=a_{pq}=a_{qp} $. Since $a_{pp} \leq \lambda$ and $a_{qq}\geq \lambda$ we have 
\[a_{pp}=a_{qq}=a_{pq}=a_{qp}=\lambda\quad,\]
and $x_p=x_q=0$.
So $\Vert x \Vert =0$ and
 \begin{eqnarray}
x^T \otimes A \otimes x 
&=& \max_{i,j \in [n]}\{x_i + a_{ij} + x_j\} \nonumber\\
&=& \max \big\{ x_p + x_q + a_{pq},\; x_q + x_p + a_{qp},\; 2x_p + a_{pp},\; 2x_q + a_{qq},\; \varepsilon \big\} \nonumber\\
&=&\max\big\{\lambda,\; \varepsilon \big\}=\lambda\quad,\nonumber
\end{eqnarray}
 which completes the proof of the first part.
 \\
\textbf{Proof of the second part: } First let $p\neq q$. Since $\lambda \leq a_{qq}$ we have $x_q=\frac{\lambda - a_{qq}}{2}\leq0$. 
So, from the way $x$ is defined, $x_p = 0$, $x_q \le 0$, and all other entries are $\varepsilon$. Thus, the largest entry of $x$ is $0$, giving $\Vert x\Vert  = 0$. Moreover, 
 \begin{eqnarray}x^T \otimes A \otimes x &=& \max_{i,j \in [n]}\{x_i +a_{ij}+x_j\} \nonumber\\
&=& \max \{x_p+x_q+a_{pq},x_q+x_p+a_{qp}, 2x_p+a_{pp},2 x_q+a_{qq}, \varepsilon\}\nonumber\\
&=&\max\{\frac{\lambda-a_{qq}+2a_{pq} }{2},\frac{\lambda-a_{qq}+2a_{qp} }{2},  a_{pp}, \lambda,\varepsilon\}= \lambda, \nonumber\end{eqnarray}
where the last equality follows from the facts that
$a_{pp}\leq \lambda$ and also
since 
\[\lambda+a_{qq} \geq 2\max\{a_{pq}, a_{qp}\}\quad,\]  
therefore 
\begin{equation}\label{cond3}
2\max\{a_{pq}, a_{qp}\}  -a_{qq} \leq \lambda\quad.
\end{equation}
So by \Cref{cond3}, 
\[\max\{\frac{\lambda-a_{qq}+2a_{pq} }{2},\frac{\lambda-a_{qq}+2a_{qp} }{2}\}\leq \frac{\lambda + (2\max\{a_{pq}, a_{qp}\}  -a_{qq})}{2}\leq \frac{2\lambda}{2}=\lambda\quad.\]
So we have the result. The proof fore the case $p=q$ is similar to the first part.

\end{proof}

\begin{corollary}
Let $A \in \rmax^{n \times n}$. By \Cref{leq,the_second}, for every tropical algebraic eigenvalue $\lambda \in \rmax$, one can explicitly construct a generalized tropical eigenvector associated with $\lambda$.
\end{corollary}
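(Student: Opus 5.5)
The corollary follows from a case split on how $\lambda$ compares with the diagonal of $A$; the two cases are exactly the hypotheses of \Cref{leq} and \Cref{the_second}. Fix a tropical algebraic eigenvalue $\lambda$ of $A$.

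First suppose $\lambda=\varepsilon$. Then $P_A$ has no constant term, and we need a nonzero $x$ with $x^T\otimes A\otimes x=\varepsilon$. I would take $x=e_i$, the vector with $0$ in position $i$ and $\varepsilon$ elsewhere, for an index $i$ with $a_{ii}=\varepsilon$. Such an $i$ exists because $\min_i a_{ii}\le\lambda=\varepsilon$; this is \Cref{interval} combined with \Cref{contain_eigs}, read with the interval allowing the endpoint $\varepsilon$. From now on assume $\lambda\in\R$.

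\textbf{Case 1: $a_{ii}\le\lambda$ for all $i$.} By \Cref{contain_eigs} and \Cref{interval}, $\lambda\le\max_{i,j}a_{ij}$, so we can pick $(p,q)$ with $a_{pq}=\max_{i,j}a_{ij}\ge\lambda$.
\begin{itemize}
\item If $a_{pq}\ge a_{qp}$, item (1) of \Cref{leq} applies.
\item Otherwise item (2) applies.
\end{itemize}
In either case we obtain an explicit $x$ with $\Vert x\Vert=0$ and $x^T\otimes A\otimes x=\lambda$.

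\textbf{Case 2: some $a_{qq}\ge\lambda$.} Choose $p$ with $a_{pp}=\min_i a_{ii}$. Exactly one of the inequalities $\lambda+a_{qq}\le 2\max\{a_{pq},a_{qp}\}$ and $\lambda+a_{qq}\ge 2\max\{a_{pq},a_{qp}\}$ is needed, and at least one always holds, because the reals are totally ordered. The corresponding item of \Cref{the_second} gives $x$ explicitly.

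The two cases are exhaustive: either every diagonal entry is at most $\lambda$, or some diagonal entry is at least $\lambda$ (they overlap only when a diagonal entry equals $\lambda$). In each case the vector is nonzero because $x_p=0$, and it satisfies $x^T\otimes A\otimes x=\lambda=\lambda\otimes x^T\otimes x$, since $x^T\otimes x=0$ by \Cref{expand2}.

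\textbf{Main obstacle.} The only delicate point is checking that the hypotheses of the two theorems are really met. One instance is the existence of an entry $a_{pq}\ge\lambda$ in Case 1, which relies on \Cref{interval}. Another is the degenerate situations: $\max\{a_{pq},a_{qp}\}=\varepsilon$, where the arithmetic with $\varepsilon$ in $x_q$ must be handled by the convention $x_q=\varepsilon$, and $\lambda=\varepsilon$, which is dealt with separately above. I would verify these edge cases directly from \Cref{expand}.
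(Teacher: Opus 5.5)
Your proposal is correct and follows the paper's implicit argument: split on whether all diagonal entries are $\le\lambda$ (apply \Cref{leq}, with an entry $a_{pq}\ge\lambda$ guaranteed by \Cref{interval} and \Cref{contain_eigs}) or some $a_{qq}\ge\lambda$ (apply \Cref{the_second}); your separate treatment of $\lambda=\varepsilon$ via $x=e_i$ with $a_{ii}=\varepsilon$ is a worthwhile addition, since the paper's formulas can produce the undefined expression $\varepsilon-\varepsilon$ in that case. The degenerate case $\max\{a_{pq},a_{qp}\}=\varepsilon$ you worry about never arises for real $\lambda$: in \Cref{leq} the condition $a_{pq}\ge\lambda$ forces $a_{pq}$ to be finite, and case (1) of \Cref{the_second} would require $\lambda+a_{qq}\le\varepsilon$, so no convention for $x_q$ is needed.
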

\begin{example}
Consider the matrix \[A = \left(\begin{array}{cccc}
3&1&3&5\\
2&0&0&0\\
-3&0&0&2\\
4&1&3&5
\end{array}\right)\;.\]
The algebraic tropical eigenvalues of $A$ are 
\[\lambda_1=0,\; \lambda_2=2,\; \lambda_3= 4,\; \lambda_4=5\;.\] 
\begin{enumerate}
\item (Eigenvector corresponding to $\lambda_1=0$) To find a generalized tropical eigenvector associated with $\lambda_1$, we apply \Cref{the_second}. We have the $\min_{i \in [4]} a_{ii}=a_{33}=0$. Choose $p=3$ and $q=4$. Then
\[\lambda_1+a_{qq}=0+5\leq 2  \max\{a_{34},a_{43}\}=2\times 3= 6\;, \]
By the first part of \Cref{the_second}, we obtain
  \[x_q = 0-\max\{a_{34},a_{43}\}=0-3 = -3\;.\] 
Hence
 $x= \left(\begin{array}{cccc}
\varepsilon&
\varepsilon&
0&
-3
\end{array}\right)^T$.
\item (Eigenvector corresponding to $\lambda_2=2$) Again applying \Cref{the_second} with $p=3$ and $q=4$:
 \[\lambda_2+a_{qq}=2+5\geq 2  \max\{a_{34},a_{43}\}=2\times 3= 6\;. \] 
Thus by the second part of \Cref{the_second}
\[x_q = \frac{2-5}{2}=-1.5.\;\]
 So, $x = \left(\begin{array}{cccc}
\varepsilon&
\varepsilon&
0&
-1.5
\end{array}\right)^T$.
\item (Eigenvector corresponding to $\lambda_3=4$) Following the same steps with $p=3$ and $q=4$ and using \Cref{the_second} we have 
\[\lambda_3+a_{qq}=4+5\geq 2  \max\{a_{34},a_{43}\}=2\times 3= 6\;.\] 
By second part of \Cref{the_second} 
\[x_q = \frac{4-5}{2}=-0.5.\;\]
Therefore $x = \left(\begin{array}{cccc}
\varepsilon&
\varepsilon&
0&
-0.5
\end{array}\right)^T$.
\item (Eigenvector corresponding to $\lambda_4= 5$) Finally, we use \Cref{leq} with $a_{pq}=a_{14}=5$. According to the first part of \Cref{leq} we have  
\[x_p=x_1 = 0, \; x_q=x_4= 5-5= 0.\;\]
Hence $x = \left(\begin{array}{cccc}
0&
\varepsilon&
\varepsilon&
0
\end{array}\right)$.
\end{enumerate}
\end{example}
\section{Tropical Rayleigh quotient upper bound for $\lambda_k$ }
The tropical Rayleigh quotient approach reveals an important property of the tropical
 algebraic eigenvalues. While the classical proof for the Rayleigh quotient theorem required the fact that the matrix be Hermitian to guarantee the upper bound, we demonstrate that no such symmetry assumption is needed for tropical matrices. In the following, we first recall the classical Rayleigh quotient  theorem and then two definitions over tropical algebra, after which we will prove an important property concerning the tropical generalized eigenvector.
  \begin{theorem}[Classical Rayleigh quotient  theorem]
Let $A\in \C^{n \times n}$ be Hermitian matrix and let $\lambda_1 \leq \cdots \leq \lambda_n$
be its algebraically ordered eigenvalues. Let $k\in[n]$ and denote by $x_i$ the classical eigenvector corresponding to $\lambda_i$, and let 
$S = \spn\{x_k, \ldots, x_n\}$. Then
\[\lambda_k=\min_{\substack{ u\in S\\ u\neq (0, \ldots, 0)^T}} \frac{u^TAu}{u^Tu}\]
\end{theorem}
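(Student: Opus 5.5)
This is the classical Courant--Fischer/Rayleigh characterization for Hermitian matrices, which the paper quotes as background. I would prove it by the standard spectral argument.

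\textbf{Step 1 (orthonormal eigenbasis).} By the spectral theorem, a Hermitian $A$ admits an orthonormal eigenbasis $x_1,\ldots,x_n$ of $\C^n$ with $Ax_i=\lambda_i x_i$. The eigenvectors $x_i$ in the statement are chosen this way; if an eigenvalue is repeated, we take orthonormal vectors inside the eigenspace. I read $u^T$ as the conjugate transpose $u^*$, so that $u^*Au$ is real, which is the Hermitian setting.

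\textbf{Step 2 (expand an arbitrary $u \in S$).} Any nonzero $u\in S$ can be written $u=\sum_{i=k}^n c_i x_i$ with not all $c_i$ zero. Orthonormality then gives
\[
u^*u=\sum_{i=k}^n |c_i|^2, \qquad u^*Au=\sum_{i=k}^n \lambda_i |c_i|^2 .
\]

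\textbf{Step 3 (lower bound).} Since $\lambda_i\ge\lambda_k$ for every $i\ge k$,
\[
u^*Au \ \ge\ \lambda_k \sum_{i=k}^n |c_i|^2 \ =\ \lambda_k\, u^*u .
\]
Hence the Rayleigh quotient of every nonzero $u\in S$ is at least $\lambda_k$.

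\textbf{Step 4 (attainment).} Taking $u=x_k\in S$ gives $x_k^*Ax_k=\lambda_k x_k^*x_k$, so the quotient equals $\lambda_k$. Therefore the infimum is attained and is a minimum, equal to $\lambda_k$.

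\textbf{Main obstacle.} The only delicate point is Step 2, namely the orthogonality of the $x_i$. It is what makes the cross terms $\bar c_i c_j x_i^*x_j$ vanish. For distinct eigenvalues, orthogonality follows from Hermitian symmetry:
\[
\lambda_j\, x_i^*x_j = x_i^*Ax_j = (Ax_i)^*x_j = \lambda_i\, x_i^*x_j ,
\]
so $x_i^*x_j=0$ when $\lambda_i\ne\lambda_j$. Within a repeated eigenvalue, we choose an orthonormal basis by Gram--Schmidt. This is exactly where the Hermitian hypothesis enters, and it contrasts with the tropical version that follows, which needs no symmetry assumption.
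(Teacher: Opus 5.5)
Your proof is correct. The paper quotes this classical result as background without proof, and your expand--bound--attain argument (write $u=\sum_{i\ge k}c_ix_i$, bound the quotient below by $\lambda_k$, attain it at $u=x_k$) is exactly the scheme the paper carries over to the tropical setting in \Cref{first_inequality}; there, your orthogonality-based cancellation of cross terms is replaced by simply discarding them in $u^T\otimes A\otimes u$ and by the scaling $\Vert x_i\Vert=0$ in $u^T\otimes u$.

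One small tidy-up concerns the choice of eigenvectors. If $k$ falls inside a repeated eigenvalue, Gram--Schmidt over the whole eigenspace block could change $S$, so orthonormalize only the $x_i$ with $i\ge k$. Alternatively, avoid choosing bases altogether: split $u=\sum_\mu v_\mu$ into its components in the distinct (mutually orthogonal) eigenspaces, which gives $u^*Au=\sum_\mu \mu\, v_\mu^*v_\mu\ge\lambda_k\,u^*u$ for any choice of the $x_i$.
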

\begin{definition}[\cite{butkovivc2010max}]
A vector $v=(v_1, \cdots,v_n) ^T\in \rmax^n$ is called a max-combination of $S \subseteq \rmax^n$ if $v =\sum_{x \in S}^{\oplus}\alpha_x\otimes x, \; \alpha_x \in \rmax$, where only a finite number of $\alpha_x$ are finite.  
\end{definition}
\begin{definition}\cite{butkovivc2010max}
The set of all max-combinations of $S$ is denoted by $\spn(S)$.
\end{definition}

Extending the classical Rayleigh quotient theorem to the tropical setting, we derive a new result using tropical algebraic eigenvalues and generalized eigenvectors. This theorem is significant because it holds universally for non-symmetric matrices. \begin{theorem}\label{first_inequality}
Let $A \in \rmax^{n \times n}$ be a matrix whose tropical algebraic eigenvalues satisfy 
$\lambda_1 \leq \cdots \leq \lambda_n$. 
Denote by $x_i$ the scaled generalized tropical eigenvector corresponding to $\lambda_i$, and let 
$S = \spn\{x_k, \ldots, x_n\}$. 
Then the following equality holds:
\[
\min_{\substack{u \in S \\ u  \neq (\varepsilon, \ldots, \varepsilon)^T}} 
\left( (u^T \otimes A \otimes u) \otimes (u^T \otimes u)^{\otimes -1} \right) 
= \lambda_k.
\]
\end{theorem}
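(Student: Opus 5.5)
The plan is to prove the two inequalities separately: the minimum is at most $\lambda_k$, and the Rayleigh quotient of every admissible $u\in S$ is at least $\lambda_k$. Since $x_k$ attains the value $\lambda_k$, the minimum exists and equals $\lambda_k$.

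\emph{Upper bound.} Take $u=x_k\in S$, which is nonzero. It is the scaled generalized tropical eigenvector for $\lambda_k$, so $\Vert x_k\Vert=0$ and, by \Cref{exist}, $x_k^T\otimes A\otimes x_k=\lambda_k$. By \Cref{expand2}, $x_k^T\otimes x_k=0$. Hence the quotient at $u=x_k$ is exactly $\lambda_k$, and the minimum is $\le\lambda_k$.

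\emph{Lower bound.} Let $u=\bigoplus_{i=k}^n \alpha_i\otimes x_i\neq(\varepsilon,\ldots,\varepsilon)^T$. Put $I=\{i:\alpha_i\neq\varepsilon\}$; it is nonempty because $u$ is nonzero. Componentwise, $u_r=\max_{i\in I}(\alpha_i+(x_i)_r)$.
\begin{itemize}
\item First, compute the denominator. Each $x_i$ is scaled, so $\Vert u\Vert=\max_{i\in I}\alpha_i=:\alpha^*$, which is finite. By \Cref{expand2}, $u^T\otimes u=2\alpha^*$.
\item Next, expand the numerator with \Cref{expand}. Distributivity of $+$ over $\max$ gives
\[
u^T\otimes A\otimes u=\max_{i,j\in I}\ \max_{r,s}\big(\alpha_i+(x_i)_r+a_{rs}+(x_j)_s+\alpha_j\big)\;\ge\;\max_{i\in I}\big(2\alpha_i+x_i^T\otimes A\otimes x_i\big)=\max_{i\in I}(2\alpha_i+\lambda_i).
\]
Here we keep only the diagonal terms $i=j$, which is valid because every term of a maximum is a lower bound for it.
\item Finally, choose $i^*\in I$ with $\alpha_{i^*}=\alpha^*$. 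Then
\[
(u^T\otimes A\otimes u)-2\alpha^*\;\ge\;2\alpha^*+\lambda_{i^*}-2\alpha^*=\lambda_{i^*}\;\ge\;\lambda_k,
\]
since $i^*\ge k$ and the eigenvalues are increasingly ordered.
\end{itemize}

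The main point needing care is this lower-bound step. The argument relies on two facts. The tropical quadratic form is superadditive over max-combinations, because cross terms can only increase the maximum; this replaces the Hermitian orthogonality used in the classical proof. And scaling makes the denominator depend only on the largest coefficient $\alpha^*$.

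Degenerate cases also need a check. If $\lambda_{i^*}=\varepsilon$, the inequality $\lambda_{i^*}\ge\lambda_k$ still holds, and forces $\lambda_k=\varepsilon$. Every quantity subtracted is finite, because $\alpha^*\in\R$.

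Combining the two bounds gives the stated equality.
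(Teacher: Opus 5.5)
Your proof is correct and takes essentially the paper's route: the upper bound comes from testing $u=x_k$, and the lower bound comes from discarding the cross terms $i\neq j$ in the expansion of $u^T\otimes A\otimes u$ and using $\lambda_i\ge\lambda_k$ for $i\ge k$. The differences are cosmetic: you get $u^T\otimes u=2\max_i\alpha_i$ directly from $\Vert u\Vert=\max_i\alpha_i$, where the paper expands $u^T\otimes u$ and bounds the cross terms via $x_i^T\otimes x_j\le 0$, and you additionally check the $\varepsilon$ cases explicitly.
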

\begin{proof}
Throughout the proof we use $k\!:\!n$ to denote the index set $\{k, \ldots, n\}$. Let $u \in S$. Then $u$ is the max-combination of $S$. So $u = \sum^{\oplus}_{i\in k:n}c_ix_i$. Hence
\begin{eqnarray}\label{first}
u^T \otimes A \otimes u &=& \bigg( \sum^{\oplus}_{i\in k : n}c_i \otimes x_i ^T\bigg) \otimes A  \otimes \bigg( \sum^{\oplus}_{j\in k: n}c_j  \otimes x_j\bigg)\nonumber\\
&=&\bigg(\sum_{i= j, \; i,j \in k : n}^{\oplus}c_i\otimes x_i^T \otimes A \otimes c_j \otimes x_j\bigg)\oplus \bigg(\sum_{i\neq j, \; i,j\in k : n}^{\oplus}c_i\otimes x_i^T\otimes A \otimes c_j \otimes x_j\bigg)\nonumber\\
&=&\bigg(\sum_{i \in k : n}^{\oplus}c_i \otimes c_i  \otimes x_i^T\otimes  A \otimes x_i\bigg)\oplus (\sum_{i\neq j, \; i,j\in k : n}^{\oplus}c_i \otimes c_j \otimes x_i^T\otimes  A  \otimes x_j\bigg)\nonumber\\
&=& \sum_{i\in k : n}c_i^{\otimes 2}\lambda_i \oplus  \bigg(\sum_{i\neq j, \; i,j \in k : n}^{\oplus}c_i \otimes x_i^T  \otimes A  \otimes c_j  \otimes x_j\bigg)\nonumber\\
&\geq & \bigg(\lambda_k\otimes \sum_{i=k:n}c_i^{\otimes 2} \bigg)\oplus\bigg( \sum_{i\neq j, i,j \in k : n}c_i \otimes x_i^T \otimes A \otimes x_j \otimes c_j\bigg)\nonumber\\
&\geq &\lambda_k \otimes  \sum_{i=k:n}c_i^{\otimes 2}.\nonumber\end{eqnarray}
where the last inequality follows directly from the maximization property of $\oplus$. Therefore we have 
\begin{equation}
u^T \otimes A \otimes u \geq \lambda_k \otimes  \sum_{i=k:n}c_i^{\otimes 2}\;.
\end{equation}
Also we have
\begin{eqnarray}
u^T\otimes u&=&  \big( \sum^{\oplus}_{i\in k : n}c_i \otimes x_i ^T\big)\otimes\big( \sum^{\oplus}_{j\in k : n}c_j\otimes x_j\big)\nonumber\\
&=&\sum^{\oplus}_{i\in k : n}c_i^{\otimes 2} \oplus \sum_{i\neq j, i,j \in k : n}c_i  \otimes x_i^T\otimes c_j \otimes x_j\;,\nonumber
\end{eqnarray}
where the last equality is by the fact that $\Vert x_i \Vert =0$ and so $x_i^T\otimes x_i =0$.
Since $\Vert x_i \Vert = \Vert x_j \Vert =0$, therefore $q_{ij}:=x_i^T\otimes x_j \leq 0$. So,
\[\sum_{i\neq j, i,j \in k : n}c_i\otimes x_i^T\otimes  c_j \otimes x_j=\max_{i\neq j, i,j \in k : n}\{c_i+c_j+q_{ij}\} \leq \max_{i\in k : n}\{2c_i\}=2\max_{i\in k : n}\{c_i\}=\sum^{\oplus}_{i\in k : n}c_i^{\otimes 2}\;.\]
Thus 
\begin{equation}\label{second}
u^{T}\otimes u = \sum^{\oplus}_{i\in k : n}c_i^{\otimes 2}.
\end{equation}

Combining \Cref{first_inequality,second} yields
\begin{equation}\label{geq}\lambda_k \leq \min_{u\in S, u \neq (\varepsilon, \ldots, \varepsilon)^T} (u^T \otimes A\otimes u)\otimes (u^T \otimes u)^{\otimes -1}\;.\end{equation}
Now, setting $u =x_k$  gives
 \begin{equation}\label{min}(x_k^T \otimes A\otimes x_k)\otimes (x_k^T \otimes x_k)^{\otimes -1}=\lambda_k.\end{equation}
Finally, combining \Cref{geq,min} gives the  result.
\end{proof}





\end{document}